\definecolor{airforceblue}{rgb}{0.36, 0.54, 0.66}
\definecolor{darkorchid}{rgb}{0.6, 0.2, 0.8}
\definecolor{darkorange}{rgb}{1.0, 0.55, 0.0}
\definecolor{darkspringgreen}{rgb}{0.09, 0.45, 0.27}
\theoremstyle{plain}
\newtheorem{thm}{Theorem}\newtheorem*{thm*}{Theorem}
\newtheorem*{conj*}{Conjecture}
\newtheorem{pr}[thm]{Proposition}
\newtheorem{lem}[thm]{Lemma}
\theoremstyle{remark}
\theoremstyle{definition}
\theoremstyle{remark}
\newcommand{\ZZ}{\mathbb{Z}}
\newcommand{\QQ}{\mathbb{Q}}
\newcommand{\CC}{\mathbb{C}}
\newcommand{\PP}{\mathbb{P}}
\newcommand{\sheaf}[1]{\mathcal{#1}}
\newcommand{\ko}{\sheaf{O}}
\begin{document}

\title[A note on special cubics of discriminant 14 and non-minimal K3 surfaces]{A note on special cubic fourfolds of discriminant 14 and non-minimal K3 surfaces of degree 10}

\author{Jordi Hern\'andez}
\address{Institut de Math\'ematiques de Toulouse ; UMR 5219 \\ 
UPS, F-31062 Toulouse Cedex 9, France}
\email{jordi\_emanuel.hernandez\_gomez@math.univ-toulouse.fr}

\begin{abstract}
We prove that a general cubic in the Hassett divisor $\mathcal{C}_{14}$ of special cubic fourfolds of discriminant $14$ contains a non-minimal K3 surface of degree $10$ containing two skew $(-1)$-lines and contained in a smooth quadric hypersurface $Q^4\subseteq \PP^5$, but not contained in any other (possibly of lower rank) quadric hypersurface.
\end{abstract}
\maketitle

\section{Introduction}
A special cubic fourfold is a smooth cubic hypersurface in $ \PP ^5$ containing a surface not homologous to a complete intersection. These form a countable family of divisors $\mathcal{C}_{d}$, known as Hassett divisors, in the moduli space of smooth cubic fourfolds indexed by an integer $d$ called discriminant, depending on the degree of the surface and its self-intersection number inside the cubic. 

Although it is expected that a very general cubic fourfold is non-rational, no example of a non-rational cubic has been found as of today. In fact, a famous conjecture of A. Kuznetsov asserts that special cubic fourfolds with a so-called admissible discriminant are exactly the rational cubic fourfolds. The condition on admissibility is purely arithmetical and can be shown to be equivalent to the existence of a polarized K3 surface that can be associated to the cubic fourfold in a Hodge-theoretical way. For example, cubics parametrized by the Hassett divisors $\mathcal{C}_{14}$, $\mathcal{C}_{26}$, $\mathcal{C}_{38}$, and $\mathcal{C}_{42}$ are known to be rational (see \cite{BoRuSt}, \cite{RuSt.fivesecants}, and \cite{RuSt.trisecant}). The first admissible discriminants $d$ are precisely $14$, $26$, $38$, and $42$.

Among all special cubic fourfolds, those of discriminant $14$ are one of the most studied due to their prominent role in the general theory of cubic fourfolds. It is well known that the closure of the locus of Pfaffian cubics, i.e., smooth cubics defined by the Pfaffian of a $6\times 6$ skew-symmetric matrix of linear forms in $\PP ^5$, is the Hassett divisor $\mathcal{C}_{14}$. In fact, a smooth cubic fourfold is Pfaffian if and only if it contains a quintic Del Pezzo surface (see \cite{Beauville.determinantal}). Moreover, $\mathcal{C}_{14}$ can also be characterized as the closure of the locus of smooth cubic fourfolds containing a rational normal quartic scroll. The rich geometry of cubics in $\mathcal{C}_{14}$, e.g., their rationality, is classically tied to the existence of the quartic scrolls and quintic Del Pezzo surfaces. Therefore, it is important to investigate the existence of other surfaces characterizing a general cubic of discriminant $14$.

The aim of this paper is to study a class of non-minimal K3 surfaces of degree $10$ in $\PP^5$ containing two skew $(-1)$-lines and contained in a smooth quadric hypersurface $Q^4\subseteq \PP^5$, but not contained in any other (possibly of lower rank) quadric hypersurface, and their relation to special cubic fourfolds of discriminant $14$. According to our terminology, based on that used by M. Gross in his classification of smooth surfaces of degree $10$ in the four-dimensional smooth quadric in \cite{gross.degree10}, we say that these surfaces are of type $^{II} Z_{E}^{10}$. The main result of this paper is the following theorem.

\begin{thm}\label{characterization of C14}
    A general cubic in $\mathcal{C}_{14}$ contains a surface of type $^{II} Z_{E}^{10}$.
\end{thm}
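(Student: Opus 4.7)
My strategy is to construct a concrete surface $S$ of type $^{II}Z_{E}^{10}$ inside a smooth cubic fourfold $X$, verify via a lattice computation that $X$ lies in $\mathcal{C}_{14}$, and then conclude by a parameter count together with the irreducibility of the divisor $\mathcal{C}_{14}$.

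\textbf{Construction of $S$.} I would start from a general smooth K3 surface $\tilde{S}$ of genus $7$ embedded in $\PP^{7}$ by its polarization $\tilde{H}$, pick two general points $p_{1},p_{2}\in\tilde{S}$, and set $S = \mathrm{Bl}_{p_{1},p_{2}}\tilde{S}$ with exceptional divisors $E_{1},E_{2}$. On $S$ the divisor $H=\pi^{*}\tilde{H}-E_{1}-E_{2}$ satisfies $H^{2}=10$, $E_{i}\cdot H=1$, and Riemann--Roch together with the very-ampleness of $\tilde{H}$ gives $h^{0}(H)=6$ for generic points. A standard separation-of-points argument shows that $|H|$ defines a closed embedding $S\hookrightarrow\PP^{5}$ sending each $E_{i}$ to a line, and the two lines are disjoint since $E_{1}\cdot E_{2}=0$; thus $S\subset\PP^{5}$ is a non-minimal K3 surface of degree $10$ carrying two skew $(-1)$-lines.

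\textbf{Containing quadric and cubic.} For $k=2,3$ the sequence $0\to\mathcal{I}_{S/\PP^{5}}(k)\to\mathcal{O}_{\PP^{5}}(k)\to\mathcal{O}_{S}(kH)\to 0$ combined with Riemann--Roch on $S$ yields $h^{0}(\mathcal{O}_{S}(2H))=20$ and $h^{0}(\mathcal{O}_{S}(3H))=44$, whence $h^{0}(\mathcal{I}_{S/\PP^{5}}(2))\geq 1$ and $h^{0}(\mathcal{I}_{S/\PP^{5}}(3))\geq 12$. For a generic choice of $\tilde S$ and of the two points, the restriction maps are as transverse as possible: $S$ is contained in a unique quadric $Q$ and in a further $5$-dimensional projective linear system of cubics not divisible by $Q$, a general member $X$ of which is smooth by Bertini. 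I would then verify that for generic $S$ the quadric $Q$ has maximal rank $6$ and that no quadric of smaller rank contains $S$, so that $S$ genuinely has Gross's type $^{II}Z_{E}^{10}$.

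\textbf{Discriminant and generality.} Combining $d=10$, $H\cdot K_{S}=H\cdot(E_{1}+E_{2})=2$, $K_{S}^{2}=-2$ and $\chi_{\mathrm{top}}(S)=26$, the self-intersection formula for a smooth surface in a smooth cubic fourfold gives
$$[S]^{2}_{X} \;=\; 6d+3H\cdot K_{S}+K_{S}^{2}-\chi_{\mathrm{top}}(S) \;=\; 60+6-2-26 \;=\; 38,$$
so the sublattice $\langle H^{2},[S]\rangle\subset H^{2,2}(X)\cap H^{4}(X,\ZZ)$ has Gram matrix $\bigl(\begin{smallmatrix}3 & 10\\ 10 & 38\end{smallmatrix}\bigr)$ of discriminant $14$, placing $X$ in $\mathcal{C}_{14}$. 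Counting parameters ($19$ for $\tilde{S}$, $4$ for the two points, $5$ projective for the residual cubic, all modulo $\mathrm{PGL}_{6}$) shows the constructed family of cubics has image of dimension at least $19$ in the moduli of cubic fourfolds; since the image is contained in the irreducible divisor $\mathcal{C}_{14}$ (also of dimension $19$), the two coincide and the general cubic in $\mathcal{C}_{14}$ contains a surface of type $^{II}Z_{E}^{10}$.

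\textbf{Main obstacle.} The hardest step is the rank/uniqueness statement for the quadric: Riemann--Roch only guarantees the existence of \emph{some} quadric through $S$, and I must rule out the a priori possibility that, for every $S$ in the family, the unique such quadric is singular, or that $S$ sits in a pencil of quadrics. I expect to handle this by an explicit infinitesimal computation at a single well-chosen point of the parameter space, propagating the conclusion by semicontinuity; an alternative route is to specialize to a degeneration where the quadric can be written down by hand.
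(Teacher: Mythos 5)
Your construction of $S$ and the discriminant computation agree with the paper (which also gets $[S]^2=38$ and discriminant $14$), and your Riemann--Roch counts $h^0(\mathcal{I}_{S/\PP^5}(2))\geq 1$, $h^0(\mathcal{I}_{S/\PP^5}(3))\geq 12$ are correct. But the final step --- dominance onto $\mathcal{C}_{14}$ --- has a genuine gap, and it is not the obstacle you flag. Your parameter count ($19+4+5$, ``all modulo $\mathrm{PGL}_6$'') is dimensionally incoherent (you cannot subtract $\dim\mathrm{PGL}_6=35$ from $28$; the correct count of embedded flags is $58+11=69$, since the cubics through $S$ form a $\PP^{11}$, not a $\PP^5$), and more importantly a parameter count for the source only ever gives an \emph{upper} bound on the dimension of the image. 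To get the lower bound $19$ you must bound from above the fibre of the map $[S\subseteq X]\mapsto [X]$, i.e.\ the dimension of the family of type-$^{II}Z_E^{10}$ surfaces inside a \emph{fixed} cubic. This fibre is not finite: it is (at least) $h^0(N_{S/X})$-dimensional, and the a priori inequality coming from the fibre-dimension theorem is $h^0(N_{S/X})\geq 69-54=15$, so a $15$-dimensional family of such surfaces lives in each cubic in the image. Nothing in your argument rules out $h^0(N_{S/X})>15$ everywhere, in which case the image would be a proper closed subset of $\mathcal{C}_{14}$ and the theorem would fail. The paper closes exactly this gap by exhibiting one explicit flag $[\mathtt{S}\subseteq\mathtt{X}]$ with $h^0(N_{\mathtt{S}/\mathtt{X}})=15$ (a \texttt{Macaulay2} computation), which together with the a priori inequality forces the image to have dimension exactly $54$ in $\mathbf{sm}|\ko_{\PP^5}(3)|$, hence to dominate $\mathcal{C}_{14}$.

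The obstacle you do identify --- that the unique quadric through $S$ be smooth and that $S$ lie on no pencil of quadrics --- is real but secondary, and your proposed remedy (an explicit example plus semicontinuity) is essentially how the paper handles it: $\chi(\mathcal{I}_{S/\PP^5}(2))=1$ plus $h^2=0$ gives $h^0\geq 1$, openness of the conditions $h^0(\mathcal{I}_{S/\PP^5}(2))=1$ and smoothness of the quadric carves out the open locus of type-$^{II}Z_E^{10}$ surfaces, and an explicit surface (produced by the \texttt{K3Surfaces} package, exactly your blow-up construction) shows this locus is non-empty. So your plan would become a correct proof once you replace the parameter count by an honest computation of $h^0(N_{S/X})$ at one point; without that, the dominance claim is unsupported.
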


This is the first example of a family of non-rational surfaces characterizing the Hassett divisor $\mathcal{C}_{14}$. We also investigate the relation of surfaces of type $^{II} Z_{E}^{10}$ and quintic Del Pezzo surfaces. Since rational normal quartic scrolls and quintic Del Pezzo surfaces in a cubic are linked via a (cubic) Segre threefold, we can ask if an analogous situation occurs for surfaces of type $^{II} Z_{E}^{10}$ and quintic Del Pezzo surfaces. We provide a negative answer to this question in the following theorem.

\begin{thm}\label{surfaces of type II are not linked to quintic DP}
    Let $X$ be a smooth cubic fourfold containing a surface $S$ of type $^{II} Z_{E}^{10}$ and a quintic Del Pezzo surface $D$. Then $S=5h^2-D$ in $H^4(X,\ZZ)$, but no smooth quintic threefold $Y\subseteq \PP^5$ satisfies $Y\cap X = S\cup D$.
\end{thm}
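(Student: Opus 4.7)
I would prove the theorem in two parts: the intersection-theoretic identity, and the geometric non-existence of a smooth linking quintic threefold.

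\textbf{The relation $S = 5h^2 - D$.} This is a lattice computation inside the rank-two labelling sublattice of $H^{2,2}(X, \ZZ)$ generated by $h^2$ and $D$, which has discriminant $14$ and intersection matrix $\bigl(\begin{smallmatrix} 3 & 5 \\ 5 & 13 \end{smallmatrix}\bigr)$ (here $h^2\cdot D = \deg D = 5$ and $D^2 = 13$ because $3\cdot 13 - 25 = 14$). A degree-$10$ class $S$ in a discriminant-$14$ labelling has self-intersection $n$ satisfying $3n - 100 = 14$, hence $S^2 = 38$. Writing $S = a h^2 + b D$ with $a,b\in\ZZ$ and imposing $h^2 \cdot S = 10$ together with $S^2 = 38$ yields $3a + 5b = 10$ and $3a^2 + 10ab + 13b^2 = 38$; the unique integer solution is $(a,b) = (5,-1)$. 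Hence $S = 5h^2 - D$, and as a by-product $S \cdot D = 25 - 13 = 12$.

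\textbf{No smooth linking $Y$: reduction to a hyperplane section of $Y$.} Suppose for contradiction that a smooth quintic threefold $Y \subset \PP^5$ satisfies $Y \cap X = S \cup D$. Then $S$ and $D$ are effective Cartier divisors on the smooth $3$-fold $Y$ with $[S]_Y + [D]_Y = 3H_Y$. Since $\dim Q > \dim Y$ we have $Q \not\subset Y$; conversely $Y \not\subset Q$ because $\operatorname{Pic}(Q^4) = \ZZ \cdot H|_Q$ forces divisors on the smooth quadric $Q^4$ to have even degree in $\PP^5$, whereas $\deg Y = 5$ is odd. Therefore $Y \cap Q$ is a proper intersection, a Cartier divisor on $Y$ of class $2H_Y$ and degree $10$, containing the irreducible surface $S$ of degree $10$; cycle-class comparison gives $[S]_Y = 2H_Y$, and so $[D]_Y = H_Y$, i.e. $D$ is a hyperplane section of $Y$.

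\textbf{Adjunction and the classification contradiction.} From $\omega_D = \mathcal{O}_D(-1)$ (anticanonical Del Pezzo) and adjunction on $Y$ I obtain $\omega_Y|_D = \mathcal{O}_D(-2)$, and by the projection formula $\omega_Y \cdot H_Y^2 = \deg(\omega_Y|_D \cdot H_D) = -2 \deg D = -10$. For a general curve section $C = Y \cap L$ with $L \cong \PP^3$ one then has
\[ 2g(C) - 2 \;=\; \omega_Y \cdot H_Y^2 + 2 H_Y^3 \;=\; -10 + 10 \;=\; 0, \]
so $Y$ is a smooth threefold in $\PP^5$ of degree $5$ and sectional genus $1$. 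By Fujita's classification of Del Pezzo varieties, the only such threefold is the Del Pezzo threefold $V_5$, which is linearly normal in $\PP^6$ with $h^0(H)=7$. Any realization of $Y$ in $\PP^5$ is therefore a linear projection of $V_5$ from a point of $\PP^6$, which must be singular because the secant variety of the smooth non-degenerate $V_5 \subset \PP^6$ fills the ambient $\PP^6$ by a dimension count. This contradicts the smoothness of $Y$, completing the proof.

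\textbf{Main obstacle.} The crucial step is the last one: extracting $Y$'s numerical invariants (degree and sectional genus) and then invoking the classification of Del Pezzo threefolds together with the non-existence of a smooth non-degenerate embedding $V_5 \hookrightarrow \PP^5$. A secondary technical point is justifying the cycle-class equality $[Y \cap Q] = [S]$, which rests on $Y \cap Q$ being an effective Cartier divisor on the smooth $3$-fold $Y$ with degree matching that of the irreducible $S$.
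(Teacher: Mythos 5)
Your lattice computation in the first half lands on the right answer, but it silently assumes the one thing that needs justification: that $S$ can be written as an \emph{integral} combination $ah^2+bD$, i.e.\ that $S$ lies in the sublattice generated by $h^2$ and $D$. The paper supplies exactly this step by citing Hassett's Proposition~3.2.4 to get an isometry fixing $h^2$ and carrying $\langle h^2,S\rangle$ to $\langle h^2,D\rangle$, which forces $b=\pm1$; the single linear equation $3a+5b=10$ then already gives $(a,b)=(5,-1)$ without your quadratic equation $S^2=38$. You should either import that justification or explain why $S\in\ZZ h^2\oplus\ZZ D$.

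For the non-linkage statement your route is genuinely different from the paper's and is essentially sound, modulo two points. The paper never assumes $S\subseteq Y$ and concludes: by Okonek a smooth non-degenerate quintic threefold $Y\subseteq\PP^5$ is the Castelnuovo threefold, whose ideal-sheaf resolution shows it lies on a quadric hypersurface, and that quadric is singular because it is $(2,3)$-linked to a $\PP^3$; since the unique quadric through $S$ is smooth, $S\not\subseteq Y$. This is shorter and proves more (no smooth quintic threefold contains $S$ at all), and it never uses $D$. You instead accept $S\subseteq Y$, correctly deduce $[S]=2H_Y$ from $Y\cap Q$ (the parity argument ruling out $Y\subseteq Q$ is fine), hence $[D]=H_Y$, and use adjunction to force sectional genus $1$. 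Two caveats: (i) your claim that $\operatorname{Sec}(V_5)=\PP^6$ ``by a dimension count'' is not conclusive --- expected dimension $7>6$ does not exclude secant defectivity; you need Zak's linear normality theorem, or, more economically, you can stop at $\pi(Y)=1$ and quote Okonek's classification (already used by the paper), since the Castelnuovo threefold has sectional genus $2$; (ii) you should record, as the paper does, that $Y$ is non-degenerate because $S$ is, before invoking any classification. With those repairs your argument is a valid alternative whose advantage is that it avoids the explicit resolution of $\mathcal{I}_{Y/\PP^5}$, at the cost of relying on the classification of polarized threefolds of small $\Delta$-genus.
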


The paper is organized as follows. In section \ref{section 1} we show that there exists a smooth open subset of a Hilbert scheme that parametrizes surfaces of type $^{II} Z_{E}^{10}$ in $\PP^5$ and we compute its dimension. In section \ref{section 2} we prove Theorem \ref{characterization of C14} by following the approach of H. Nuer in \cite{nuer17}. The idea of the proof is to consider the Hilbert scheme of flags $[S\subseteq X]$ where $S$ is a surface of type $^{II} Z_{E}^{10}$ and $X$ is a smooth cubic fourfold. Standard deformation theory for Hilbert schemes and semicontinuity arguments imply that if we can exhibit an explicit flag $[\mathtt{S}\subseteq \mathtt{X}]$ such that $h^0(N_{\mathtt{S}/\mathtt{X}})= 15$, then there exists an open neighborhood of $[\mathtt{S}\subseteq \mathtt{X}]$ in the Hilbert scheme of flags that is projected dominantly into the locus of special cubic fourfolds of discriminant $14$. We use \texttt{Macaulay2} in order to exhibit such flag. Finally, in section \ref{section 3} we prove Theorem \ref{surfaces of type II are not linked to quintic DP}. 

\section{Surfaces of type \texorpdfstring{$^{II} Z_{E}^{10}$}{TEXT}}\label{section 1}
In the notation used by M. Gross in \cite{gross.degree10}, surfaces of type $Z_E ^{10}$ are non-degenerate surfaces of degree $10$ in a fixed smooth quadric fourfold $Q^4$ containing two skew $(-1)$-lines whose contraction defines a K3 surface of genus $7$. There is, however, a remarkable difference between the class of surfaces of type $Z_E ^{10}$ that are contained in a quadratic complex of $Q^4$, i.e., a complete intersection of $Q^4$ with another quadric in $\PP^5$, and the class of those surfaces which are not. In \cite[Definition~10]{hernández2024cubocubic} we defined a surface $S\subseteq Q^4$ to be of type $^{II} Z_{E}^{10}$ if it is a surface of type $Z_E ^{10}$ falling in the latter class. More generally, we say that a smooth surface $S\subseteq \PP^5$ is of type $^{II} Z_{E}^{10}$ if there exists a smooth quadric $Q^4$ containing $S$ and this surface is of type $^{II} Z_{E}^{10}$ according to the previous definition. Thus, $h^0(\mathcal{I}_{S/\PP^5}(2))=1$, or equivalently, there is no other quadric hypersurface in $\PP^5$ containing $S$.

Define the polynomial $P(t):=5t^2 - t +2 \in \QQ [t]$. We know that a smooth surface in $Q^4$ with Hilbert polynomial $P(t)$ can only be of type $ Z_B^{10}$ or $ Z_{E}^{10}$ by \cite[Lemma~11]{hernández2024cubocubic}. According to \cite{gross.degree10}, a surface of type $ Z_B^{10}$ is a surface of degree $10$ with a $(-1)$-conic whose contraction is a K3 surface. We extend the definition of surfaces of type $Z_B^{10}$ in $Q^4$ to $\PP^5$ as before. Let ${Hilb}_{Q^4}^{P(t)}$ and ${Hilb}_{\PP^5}^{P(t)}$ be the Hilbert schemes parametrizing closed subschemes of $Q^4$ and $\PP^5$, respectively, with Hilbert polynomial $P(t)$. 

\begin{lem}\label{Hilb is smooth at S type EII and type B} We have the following:
\begin{enumerate}
    \item Let $S$ be a surface of type $ Z_B^{10}$. Then ${Hilb}_{\PP^5}^{P(t)}$ is smooth at $[S]$ and the Zariski tangent space $T_{[S]}{Hilb}_{\PP^5}^{P(t)}$ has dimension $56$.
    \item Let $S$ be a surface of type $^{II} Z_{E}^{10}$. Then ${Hilb}_{\PP^5}^{P(t)}$ is smooth at $[S]$ and the Zariski tangent space $T_{[S]}{Hilb}_{\PP^5}^{P(t)}$ has dimension $58$.
\end{enumerate}
\end{lem}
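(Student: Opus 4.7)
The plan is to deduce the smoothness of $Hilb_{\PP^5}^{P(t)}$ at $[S]$ from the analogous smoothness of $Hilb_{Q^4}^{P(t)}$ at $[S]$, by exploiting the flag $S \subseteq Q^4 \subseteq \PP^5$. The key tool is the normal bundle short exact sequence
\begin{equation*}
0 \to N_{S/Q^4} \to N_{S/\PP^5} \to N_{Q^4/\PP^5}\bigl|_S \cong \mathcal{O}_S(2) \to 0,
\end{equation*}
combined with the standard deformation-theoretic identifications $T_{[S]}Hilb_{\PP^5}^{P(t)} = H^0(N_{S/\PP^5})$ and the fact that smoothness at $[S]$ reduces to the vanishing $H^1(N_{S/\PP^5}) = 0$.

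First, I would compute $H^\bullet(S, \mathcal{O}_S(2))$. In both cases $S$ is a non-minimal K3, so $\chi(\mathcal{O}_S) = 2$, and the canonical class $K_S$ is a sum of $(-1)$-curves for which $H \cdot K_S = 2$ (whether one $(-1)$-conic in case (1) or two $(-1)$-lines in case (2)). Riemann--Roch then yields $\chi(\mathcal{O}_S(2)) = \chi(\mathcal{O}_S) + \tfrac12 (2H)(2H - K_S) = 20$. The vanishing $H^2(\mathcal{O}_S(2)) = 0$ follows from Serre duality, since $H \cdot (K_S - 2H) = -18 < 0$ makes $K_S - 2H$ non-effective. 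The vanishing $H^1(\mathcal{O}_S(2)) = 0$ can be obtained via a Leray computation on the contraction $\pi\colon S \to \tilde S$ onto the minimal K3 model, or equivalently from the projective normality of $S$ in degree $2$, itself consistent with $h^0(\mathcal{I}_{S/\PP^5}(2)) = 1$. Thus $h^0(\mathcal{O}_S(2)) = 20$.

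Second, I would import from \cite{hernández2024cubocubic} the parallel analysis in the quadric: $Hilb_{Q^4}^{P(t)}$ is smooth at $[S]$, with $H^1(N_{S/Q^4}) = 0$ and tangent space of dimension $36$ for type $Z_B^{10}$ and $38$ for type $^{II}Z_E^{10}$. Plugging these into the long exact cohomology sequence of the displayed short exact sequence, the connecting map $H^0(\mathcal{O}_S(2)) \to H^1(N_{S/Q^4})$ automatically vanishes and one obtains
\begin{equation*}
0 \to H^0(N_{S/Q^4}) \to H^0(N_{S/\PP^5}) \to H^0(\mathcal{O}_S(2)) \to 0,
\end{equation*}
together with an injection $H^1(N_{S/\PP^5}) \hookrightarrow H^1(\mathcal{O}_S(2)) = 0$. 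This simultaneously gives smoothness at $[S]$ and $h^0(N_{S/\PP^5}) = h^0(N_{S/Q^4}) + 20$, matching $56$ and $58$ respectively.

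The step I expect to require the most care is the input on $H^\bullet(N_{S/Q^4})$. For type $^{II}Z_E^{10}$ these numbers should already be available in the author's earlier work, but for type $Z_B^{10}$ they may need to be redone, either by a parallel deformation-theoretic argument grounded in Gross's classification \cite{gross.degree10} or, in keeping with the rest of the paper, by exhibiting an explicit $Z_B^{10}$ in $\texttt{Macaulay2}$, computing the normal bundle cohomology on the nose, and concluding via semicontinuity on an open neighborhood of $[S]$.
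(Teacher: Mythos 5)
Your proposal follows essentially the same route as the paper: both reduce to the known smoothness of $Hilb_{Q^4}^{P(t)}$ at $[S]$ via the normal bundle sequence $0\to N_{S/Q^4}\to N_{S/\PP^5}\to \ko_S(2)\to 0$ and then show $H^0(\ko_S(2))=\CC^{20}$, $H^1(\ko_S(2))=0$. The only divergence is bookkeeping: the paper gets the cohomology of $\ko_S(2)$ from Gross's resolution of $\mathcal{I}_{S/Q^4}$ (which is also the clean justification for the $H^1$-vanishing you obtain from Riemann--Roch, Serre duality, and $2$-normality --- note that the Leray argument on the contraction does not apply directly since $2H$ is not a pullback from the minimal model).
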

\begin{proof} 
    Let $S\subseteq Q^4$ be a surface of type $^{II} Z_{E}^{10}$. In \cite[Lemma~12]{hernández2024cubocubic} we proved that $H^1(N_{S/Q^4})=H^2(N_{S/Q^4})=0$, which implies that ${Hilb}_{Q^4}^{P(t)}$ is smooth at $[S]$ and $dim(T_{[S]} {{Hilb}_{Q^4}^{P(t)}} )=dim (H^0(N_{S/Q^4}))=\chi(N_{S/Q^4})=38$. 
    
    Consider the short exact sequence induced from the nested sequence of inclusions of smooth varieties $S\subseteq Q^4\subseteq \PP^5$
    \[0\rightarrow N_{S/Q^4}\rightarrow N_{S/\PP^5}\rightarrow {N_{Q^4/\PP^5}}_{|S}=\ko_S(2)\rightarrow 0,\]
    the tautological short exact sequence 
    \[0\rightarrow \mathcal{I}_{S/ Q^4}\rightarrow \ko_{Q^4}\rightarrow \ko_S\rightarrow 0,\]
    and the resolution of the ideal sheaf of $S$ in $Q^4$ given in \cite[Proposition~4.3]{gross.degree10}
    \[0\rightarrow \mathcal{E}(-3)\oplus \mathcal{E}'(-3)\oplus \ko_{Q^4}(-4)\rightarrow \ko_{Q^4}(-3)^{\oplus 6} \rightarrow \mathcal{I}_{S/Q^4}\rightarrow 0.\]
    Here $\mathcal{E}$ and $\mathcal{E}'$ denote the spinor bundles of $Q^4$.
    The long exact sequences in cohomology induced from the three sequences above allow us to easily compute $H^0(N_{S/\PP^5})= H^0(N_{S/Q^4})\oplus \CC^{20}$ and $H^1(N_{S/\PP^5})=H^2(N_{S/\PP^5})=0$. Hence, ${Hilb}_{\PP^5}^{P(t)}$ is also smooth at $[S]$ and $dim(T_{[S]} {{Hilb}_{\PP^5}^{P(t)}} )=dim (H^0(N_{S/\PP^5}))=58$. 

    Finally, the same arguments above can be applied to prove the statements about surfaces of type $ Z_{B}^{10}$ in $\PP^5$.
\end{proof} 

\begin{pr}\label{open subset parametrizing surfaces of type II} There is a smooth open subset $U\subseteq Hilb_{\PP^5}^{P(t)}$ of dimension $58$ parametrizing surfaces of type $^{II} Z_{E}^{10}$ in $\PP^5$.
\end{pr}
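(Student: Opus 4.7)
The plan is to construct $U$ as a Zariski neighborhood of an arbitrary $[S]$ with $S$ of type $^{II}Z_E^{10}$, combining Lemma~\ref{Hilb is smooth at S type EII and type B}(2) with an incidence-variety étaleness argument and the classification of smooth degree-$10$ surfaces in a smooth quadric fourfold.

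First I would invoke Lemma~\ref{Hilb is smooth at S type EII and type B}(2) to obtain an open neighborhood $V_0$ of $[S]$ in $Hilb_{\PP^5}^{P(t)}$ on which the Hilbert scheme is smooth of dimension $58$. Let $Q$ be the unique quadric containing $S$ (unique since $h^0(\mathcal{I}_{S/\PP^5}(2))=1$). I would then introduce the incidence variety
\[\mathcal{V} := \{([S'],[Q']) \in Hilb_{\PP^5}^{P(t)} \times |\ko_{\PP^5}(2)| : S' \subseteq Q'\}\]
together with its first projection $p_1 : \mathcal{V} \to Hilb_{\PP^5}^{P(t)}$, and argue that $p_1$ is étale at $([S],[Q])$.

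The core of the plan is this étaleness check, which breaks into two pieces. For (a) smoothness of $\mathcal{V}$ of dimension $58$ at $([S],[Q])$: over the smooth quadric locus of $|\ko_{\PP^5}(2)|$, the variety $\mathcal{V}$ is the relative Hilbert scheme, whose fibre $Hilb_Q^{P(t)}$ at $Q$ is smooth of dimension $38$ at $[S]$ by \cite[Lemma~12]{hernández2024cubocubic}, so the total space is smooth of dimension $38+20=58$ at $([S],[Q])$. For (b) the bijectivity of $dp_1$: a pair $(v,w)\in H^0(N_{S/\PP^5}) \oplus \bigl(H^0(\ko_{\PP^5}(2))/\langle Q\rangle\bigr)$ is tangent to $\mathcal{V}$ exactly when the image of $v$ under $H^0(N_{S/\PP^5}) \to H^0(\ko_S(2))$, coming from the normal exact sequence $0 \to N_{S/Q^4} \to N_{S/\PP^5} \to \ko_S(2) \to 0$, equals (up to sign) the restriction of a lift of $w$ to $S$. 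The vanishings $h^0(\mathcal{I}_{S/\PP^5}(2))=1$ and $h^1(\mathcal{I}_{S/\PP^5}(2))=0$ recorded in the proof of Lemma~\ref{Hilb is smooth at S type EII and type B} make the restriction $H^0(\ko_{\PP^5}(2))/\langle Q\rangle \to H^0(\ko_S(2))$ an isomorphism, so $w$ is uniquely determined by $v$ and $dp_1$ becomes a bijection between two $58$-dimensional tangent spaces.

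Once $p_1$ is étale at $([S],[Q])$, its image contains a Zariski open neighborhood $V_1 \subseteq V_0$ of $[S]$. Shrinking $V_1$ I would arrange that every $[S'] \in V_1$ has $S'$ smooth (openness of smoothness), satisfies $h^0(\mathcal{I}_{S'/\PP^5}(2)) \leq 1$ (upper semicontinuity), and has its unique enveloping quadric $Q'$ smooth (openness in $|\ko_{\PP^5}(2)|$). Such an $S'$ is a smooth degree-$10$ surface contained in a unique smooth quadric with Hilbert polynomial $P(t)$, so by \cite[Lemma~11]{hernández2024cubocubic} it is of type $Z_B^{10}$ or $Z_E^{10}$; the former is excluded because Lemma~\ref{Hilb is smooth at S type EII and type B}(1) would force the Hilbert scheme to be smooth of dimension $56$ at such a point, contradicting $V_1 \subseteq V_0$ being smooth of dimension $58$. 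Hence every $[S'] \in V_1$ parametrizes a surface of type $^{II}Z_E^{10}$, and $U := V_1$ works.

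The hard part is the étaleness of $p_1$ at $([S],[Q])$ in step (b); everything else is a routine combination of semicontinuity, openness of smoothness in flat families, and the dimension discrepancy between the $Z_B^{10}$ and $^{II}Z_E^{10}$ loci provided by Lemma~\ref{Hilb is smooth at S type EII and type B}. I anticipate no surprises there since the required cohomological vanishings are already established.
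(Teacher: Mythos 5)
Your proposal is correct, but the mechanism you use for the crucial middle step differs from the paper's. Both arguments share the same skeleton: show that every surface parametrized near $[S]$ still lies on a unique smooth quadric, then invoke the classification of smooth surfaces with Hilbert polynomial $P(t)$ in $Q^4$ (\cite[Lemma~11]{hernández2024cubocubic}) and rule out type $Z_B^{10}$ by the $56$ vs.\ $58$ tangent-space discrepancy of Lemma~\ref{Hilb is smooth at S type EII and type B} --- that endgame is identical. Where you diverge is in how you establish that nearby surfaces remain quadric-contained: you build the incidence variety $\mathcal{V}$ and prove the first projection is \'etale at $([S],[Q])$, using the smoothness of $Hilb_{Q}^{P(t)}$ at $[S]$ (dimension $38$) over the $20$-dimensional space of quadrics together with the identification of the tangent space to the flag scheme. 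The paper instead argues globally and more cheaply: $\chi(\mathcal{I}_{S/\PP^5}(2))=1$ for \emph{every} point of $Hilb_{\PP^5}^{P(t)}$, so on the open locus where $h^2(\mathcal{I}_{S/\PP^5}(2))=0$ one automatically has $h^0(\mathcal{I}_{S/\PP^5}(2))\geq 1$, and then one cuts down by the open conditions $h^0=1$, smoothness of the quadric, and smoothness of the surface. Your route needs the extra vanishing $h^1(\mathcal{I}_{S/\PP^5}(2))=0$ to make the restriction $H^0(\ko_{\PP^5}(2))/\langle Q\rangle\to H^0(\ko_S(2))$ an isomorphism; this is true (it follows from Gross's resolution of $\mathcal{I}_{S/Q^4}$ twisted by $2$, combined with $0\to\ko_{\PP^5}\to\mathcal{I}_{S/\PP^5}(2)\to\mathcal{I}_{S/Q^4}(2)\to 0$), but it is not literally ``recorded'' in the proof of Lemma~\ref{Hilb is smooth at S type EII and type B}, so you should derive it explicitly. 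What your heavier argument buys is local information the paper does not extract --- the unique quadric deforms algebraically with the surface and $p_1$ is a local isomorphism onto its image --- while the paper's Euler-characteristic argument is shorter and yields the openness of the entire $^{II}Z_E^{10}$ locus (and of the $Z_B^{10}$ locus) in one stroke rather than one point at a time.
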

\begin{proof}

    Let $[S]\in {Hilb}_{\PP^5}^{P(t)}$ and consider the short exact sequence 
    \[0 \rightarrow \mathcal{I}_{S/\PP^5}(2)\rightarrow \ko _{\PP^5} (2)\rightarrow \ko _S (2)\rightarrow 0.\] 
    By the additivity property of the Euler characteristic $\chi$ over exact sequences, we have $\chi(\mathcal{I}_{S/\PP^5}(2))+ P(2)=\chi (\ko _{\PP^5} (2))$, or equivalently, $\chi(\mathcal{I}_{S/\PP^5}(2))=1$. In particular, the inequality $h^0(\mathcal{I}_{S/\PP^5}(2))+h^2(\mathcal{I}_{S/\PP^5}(2))\geq 1$ holds. On the other hand, we have $h^2(\mathcal{I}_{S/\PP^5}(2))=0$ if the surface $S$ is of type $ Z_{B}^{10}$ or $ Z_{E}^{10}$ (see the proof of \cite[Proposition~4.3]{gross.degree10}). By the semicontinuity theorem, there is an open subset of $W\subseteq {Hilb}_{\PP^5}^{P(t)}$ containing surfaces of type $ Z_{B}^{10}$ and $ Z_{E}^{10}$ and characterized by the condition $h^2(\mathcal{I}_{S/\PP^5}(2))=0$. In particular, $h^0(\mathcal{I}_{S/\PP^5}(2))\geq 1$ on $W$, and thus we have $W=\bigcup _{V(2)\in |\ko _{\PP^5} (2)| }{Hilb}_{V(2)}^{P(t)}$. Now consider the open subset $V$ of $W$ characterized by the equation $h^0(\mathcal{I}_{S/\PP^5} (2))= 1$. The preimage of the locus of smooth quadrics in $|\ko _{\PP^5} (2)|$ under the canonical morphism $V\rightarrow |\ko _{\PP^5} (2)|$ is open in $V$. Moreover, the locus of smooth surfaces in $Hilb_{\PP^5} ^{P(t)}$ is open as well. Hence, there is an open subset of $Hilb_{\PP^5} ^{P(t)}$ parametrizing smooth surfaces contained in a unique smooth quadric. Thus, these surfaces can only be of type $Z_{B}^{10}$ and $^{II} Z_{E}^{10}$. Since the tangent space dimensions differ from one class of surfaces to the other, we see that there are two disjoint smooth open subsets of ${Hilb}_{\PP^5}^{P(t)}$ parametrizing surfaces of type $Z_{B}^{10}$ and $^{II} Z_{E}^{10}$, respectively. We thus have a smooth open subset $U$ of $Hilb_{\PP^5}^{P(t)}$ of dimension $58$ parametrizing surfaces of type $^{II} Z_{E}^{10}$ by Lemma \ref{Hilb is smooth at S type EII and type B}.
\end{proof}

\section{Special cubic fourfolds of discriminant 14}\label{section 2}

A special cubic fourfold is a smooth cubic hypersurface $X\subseteq \PP ^5$ containing a surface $S$ not homologous to a complete intersection. The saturation of the rank $2$ sublattice of $H^4(X, \ZZ )$ generated by the square of the hyperplane class $h$ of $X$ and the class of the surface $S$ has discriminant $d=3S^ 2 - deg(S)^2$. We also say that $X$ has discriminant $d$. 

The coarse moduli space $\mathcal{C}=\sfrac{\mathbf{sm}|\ko_{\PP^5}(3)|}{PGL(6)}$ of smooth cubic fourfolds in $\PP^5$ is quasi-projective and $20$-dimensional. Special cubic fourfolds in $\mathcal{C}$ are parametrized by a countable family of irreducible divisors $\mathcal{C}_d$, called Hassett divisors, indexed by discriminants $d>6$ such that $d=0\text{ or }2\text{ mod }  6$ by \cite[Theorem~4.3.1]{Hassett.special}.

We aim to show that surfaces of type $^{II} Z_{E}^{10}$ characterize cubics in the Hassett divisor $\mathcal{C}_{14}$. First of all, we need to make sense of our claim by proving the following lemma.

\begin{lem}\label{cubics containing surfaces of type II are special}
    If $S$ is a surface of type $ ^{II} Z_{E}^{10}$ and $X$ is a smooth cubic fourfold that contains $S$, then $X$ is special of discriminant $14$, i.e., $[X]\in \mathcal{C}_{14}$. 
\end{lem}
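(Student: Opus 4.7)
The plan is to reduce the statement to the computation of a single intersection number. Since $\deg(S)=10$ and $h^4=3$ on $X$, the Gram matrix of the rank-two sublattice $\langle h^2,[S]\rangle\subseteq H^4(X,\ZZ)$ in this basis is
\[\begin{pmatrix} 3 & 10 \\ 10 & S^2 \end{pmatrix},\]
whose determinant is $3S^2-100$. To conclude that $[X]\in\mathcal{C}_{14}$ it thus suffices to show $S^2=38$, that this sublattice is saturated, and that $[S]$ is not proportional to $h^2$.

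For the self-intersection, my approach is to use $S^2=\deg c_2(N_{S/X})$. The normal bundle sequence $0\to T_S\to T_X|_S\to N_{S/X}\to 0$, combined with the Euler sequence of $\PP^5$ restricted to $X$, gives $c(T_X)=(1+h)^6/(1+3h)$, so that $c_1(T_X|_S)=3h$ and $c_2(T_X|_S)=6h^2$. A short Chern-class manipulation then yields the classical expression
\[S^2 = 6\deg(S) + 3\,h\cdot K_S + K_S^2 - c_2(S).\]

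The four invariants on the right can be read off from the description of surfaces of type $^{II}Z_E^{10}$ recalled at the beginning of this section: $S$ is the blow-up $\pi\colon S\to T$ of a K3 surface $T$ of genus $7$ at two distinct points, with hyperplane class of the form $h=\pi^* H_T - E_1 - E_2$, where $H_T^2=12$ and $E_1,E_2$ are the two exceptional $(-1)$-lines. Standard blow-up formulas then give $K_S=E_1+E_2$, $K_S^2=-2$, $h\cdot K_S=2$, and $c_2(S)=\chi_{top}(T)+2=26$, so that $S^2=60+6-2-26=38$ and the Gram determinant equals $14$.

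Finally, $14$ is squarefree, so any rank-two overlattice of $\langle h^2,[S]\rangle$ would have Gram determinant $14/k^2$ for some positive integer $k$, forcing $k=1$; in particular, the sublattice is already saturated. Since $\deg(S)=10$ is not divisible by $3$, the class $[S]$ is not a multiple of $h^2$, so $S$ is not homologous to a complete intersection, and $X$ is special of discriminant $14$. I do not anticipate any serious obstacle here: the argument is essentially a Chern-class bookkeeping computation together with the squarefree saturation remark.
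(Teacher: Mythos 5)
Your proposal is correct and follows essentially the same route as the paper: both compute $S^2=6\deg(S)+3h\cdot K_S+K_S^2-c_2(S)=38$, get discriminant $3S^2-\deg(S)^2=14$, and invoke squarefreeness of $14$ for saturation (you merely fill in the blow-up computation of the invariants that the paper cites from Gross, and check non-proportionality to $h^2$ via $3\nmid\deg(S)$ rather than $3\nmid S^2$). No gaps.
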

\begin{proof}
    The surface $S$ is not homologous to a complete intersection, since $S^2=c_2(N_{S / X})=6 deg(S)+3 h K_S+K_S^2-c_2 (S)=(6\times 10)+(3\times 2) -2-26=38$ according to \cite[\S 4]{Hassett.special} and \cite[Proposition~4.3, Table~1]{gross.degree10}, but $38$ is not divisible by $(h^2)^2=3$. Moreover, $\langle h^2, S\rangle$ is a sublattice of $H^4(X,\ZZ)$ of rank $2$ and discriminant $3S^ 2 - deg(S)^2=(3\times 38) -{10}^2=114-100=14$ that is necessarily saturated, since $14$ is a square-free integer. 
\end{proof}

\begin{proof}[(Proof of Theorem \ref{characterization of C14})]
    Consider the Hilbert scheme of flags 
    \[\mathcal{FH}:=\{[S\subseteq X]:[S]\in {Hilb}_{\mathbb{P}^5}^{P
    (t)} \text{ and } [X]\in \mathbf{sm}|\ko_{\PP^5}(3)|\}\subseteq {Hilb}_{\mathbb{P}^5}^{P
    (t)}\times \mathbf{sm}|\ko_{\PP^5}(3)|.\] 
    Note that the first projection $p: \mathcal{FH} \rightarrow {Hilb}_{\mathbb{P}^5}^{P
    (t)}$ has fiber $p^{-1}([S])=\mathbb{P}(H^0(\mathcal{I}_{S / \mathbb{P}^5}(3)))$, while the second projection $q: \mathcal{FH} \rightarrow \mathbf{sm}|\ko _{\PP^5}(3)|$ has fiber $q^{-1}([X])={Hilb}_{X}^{P
    (t)}$.

    Let $[S]\in {Hilb}_{\PP^5}^{P(t)}$. We have the short exact sequence 
     \[0 \rightarrow \mathcal{I}_{S/\PP^5}(3)\rightarrow \ko _{\PP^5} (3)\rightarrow \ko _S (3)\rightarrow 0.\] 
    By the additivity property of the Euler characteristic $\chi$ over exact sequences, we have $\chi(\mathcal{I}_{S/\PP^5}(2))+ P(3)=\chi (\ko _{\PP^5} (3))$, or equivalently, $\chi(\mathcal{I}_{S/\PP^5}(3))=12$. In particular, the inequality $h^0(\mathcal{I}_{S/\PP^5}(3))+h^2(\mathcal{I}_{S/\PP^5}(3))\geq 12$ holds. On the other hand, we have $h^2(\mathcal{I}_{S/\PP^5}(3))=0$ for any surface $S$ of type $ Z_{E}^{10}$ (see the proof of \cite[Proposition~4.3]{gross.degree10}). In particular, we have $h^0(\mathcal{I}_{S/\PP^5}(3))\geq 12$ for all surfaces $S$ of type $^{II} Z_{E}^{10}$. In fact, it is possible to construct an explicit surface \texttt{S} of type $^{II} Z_{E}^{10}$ such that $h^0(\mathcal{I}_{\texttt{S}/\PP^5}(3))=12$ using \texttt{Macaulay2}:
    
\bigskip
{\footnotesize
\begin{Verbatim}[commandchars=&!$]
Macaulay2, version 1.22.0.1
with packages: ConwayPolynomials, Elimination, IntegralClosure, InverseSystems, Isomorphism, 
LLLBases, MinimalPrimes, OnlineLookup, PrimaryDecomposition, ReesAlgebra, Saturation, 
TangentCone, Varieties

&colore!darkorange$!i1 $: &colore!airforceblue$!needsPackage$ "K3Surfaces"

&colore!darkspringgreen$!o1 $= &colore!darkorchid$!K3Surfaces$

&colore!darkspringgreen$!o1 $: Package

&colore!darkorange$!i2 $: S0 = &colore!airforceblue$!K3$(7,5,0)

&colore!darkspringgreen$!o2 $= &colore!darkorchid$!K3 surface with rank 2 lattice defined by the intersection matrix:$                                                       
                                                                         &colore!darkorchid$!| 12 5 |$
                                                                         &colore!darkorchid$!| 5  0 |$
     &colore!darkorchid$!-- (1,0): K3 surface of genus 7 and degree 12 containing elliptic curve of degree 5$ 

&colore!darkspringgreen$!o2 $: Lattice-polarized K3 surface

&colore!darkorange$!i3 $: S = &colore!airforceblue$!project$({1,1},S0(1,0))
-- *** simulation ***
-- surface of degree 12 and sectional genus 7 in PP^7 (quadrics: 10, cubics: 64)
-- surface of degree 11 and sectional genus 7 in PP^6 (quadrics: 5, cubics: 34)
-- surface of degree 10 and sectional genus 7 in PP^5 (quadrics: 1, cubics: 12)

-- (degree and genus are as expected)

&colore!darkspringgreen$!o3 $= &colore!darkorchid$!S$

&colore!darkspringgreen$!o3 $: ProjectiveVariety, surface in PP^5

&colore!darkorange$!i4 $: &colore!airforceblue$!singularLocus$ S

&colore!darkspringgreen$!o4 $= &colore!darkorchid$!empty subscheme of PP^5$

&colore!darkspringgreen$!o4 $: ProjectiveVariety, empty subscheme of PP^5

&colore!darkorange$!i5 $: &colore!airforceblue$!euler$ S

&colore!darkspringgreen$!o5 $= &colore!darkorchid$!26$

&colore!darkorange$!i6 $: Q = &colore!airforceblue$!random$(2,S)

&colore!darkspringgreen$!o6 $= &colore!darkorchid$!Q$

&colore!darkspringgreen$!o6 $: ProjectiveVariety, hypersurface in PP^5

&colore!darkorange$!i7 $: &colore!airforceblue$!describe$ Q

&colore!darkspringgreen$!o7 $= &colore!darkorchid$!ambient:.............. PP^5$
     &colore!darkorchid$!dim:.................. 4$
     &colore!darkorchid$!codim:................ 1$
     &colore!darkorchid$!degree:............... 2$
     &colore!darkorchid$!generators:........... 2^1$
     &colore!darkorchid$!purity:............... true$
     &colore!darkorchid$!dim sing. l.:......... -1$
\end{Verbatim}
} \noindent 

    \bigskip
    Recall that there exists a smooth open set $U\subseteq Hilb_{\PP^5}^{P(t)}$ of dimension $58$ that parametrizes surfaces of type $^{II} Z_{E}^{10}$ in $\PP^5$ by Proposition \ref{open subset parametrizing surfaces of type II}. In view of the previous \texttt{Macaulay2} script, we have a non-empty open subset $U'$ of $U$ such that $h^0(\mathcal{I}_{S/\PP^5}(3))=12$ for all $[S]\in U'$ by the semicontinuity theorem. Hence, $p^{-1}(U')\subseteq \mathcal{FH}$ is a smooth projective bundle over $U'$ and thus
    \[dim_{[S\subseteq X]}(p^{-1}(U'))=dim(T_{[S]}{Hilb}_{\mathbb{P}^5}^{P(t)})+ h^0( \mathcal{I}_{S / \mathbb{P}^5}(3))-1=58+12-1=69\]
    for any flag $[S \subseteq X]\in p^{-1}(U')$.
    
    Note that $q: \mathcal{FH} \rightarrow \mathbf{sm}|\ko_{\PP^5}(3)|$ maps $p^{-1}(U')$ into the preimage of the Hassett divisor $\mathcal{C}_{14}$ under the quotient map $\mathbf{sm}|\ko_{\PP^5}(3)|\rightarrow \mathcal{C}=\sfrac{\mathbf{sm}|\ko_{\PP^5}(3)|}{PGL(6)}$ by Lemma \ref{cubics containing surfaces of type II are special}. Thus, $dim (q(p^{-1}(U'))) \leq dim(\mathcal{C}_{14})+ dim PGL(6)=54$. By applying the theorem on the dimension of the fibers, we have 
\begin{align*}
    h^0( N_{S/X}) &=dim(T_{[S]}Hilb_{X}^{P(t)})\\
    &\geq dim_{[S\subseteq X]} (q^{-1}([X]))\\
    &\geq dim (p^{-1}(U'))-dim (q(p^{-1}(U')))\\
    &\geq dim (p^{-1}(U'))-54 \\
    &=69-54\\
    &=15
\end{align*}
    for any flag $[S \subseteq X]\in q^{-1}([X]) $.

    We can use \texttt{Macaulay2} to find a smooth cubic \texttt{X} containing \texttt{S} such that $h^0( N_{\texttt{S}/\texttt{X}})=15$:

\bigskip
{\footnotesize
\begin{Verbatim}[commandchars=&!$]
&colore!darkorange$!i8 $: X = &colore!airforceblue$!random$(3,S)

&colore!darkspringgreen$!o8 $= &colore!darkorchid$!X$

&colore!darkspringgreen$!o8 $: ProjectiveVariety, hypersurface in PP^5

&colore!darkorange$!i9 $: &colore!airforceblue$!describe$ X

&colore!darkspringgreen$!o9 $= &colore!darkorchid$!ambient:.............. PP^5$
     &colore!darkorchid$!dim:.................. 4$
     &colore!darkorchid$!codim:................ 1$
     &colore!darkorchid$!degree:............... 3$
     &colore!darkorchid$!generators:........... 3^1$
     &colore!darkorchid$!purity:............... true$
     &colore!darkorchid$!dim sing. l.:......... -1$

&colore!darkorange$!i10 $: &colore!airforceblue$!rank$ &colore!airforceblue$!HH$^0(&colore!airforceblue$!sheaf$ &colore!airforceblue$!Hom$((&colore!airforceblue$!ideal$ S/&colore!airforceblue$!ideal$ X)**(&colore!airforceblue$!ring$(&colore!airforceblue$!ideal$ S)/&colore!airforceblue$!ideal$ S),&colore!airforceblue$!ring$(&colore!airforceblue$!ideal$ S)/&colore!airforceblue$!ideal$ S))

&colore!darkspringgreen$!o10 $= &colore!darkorchid$!15$
\end{Verbatim}
} \noindent 

    \bigskip
    This implies that $dim (q(p^{-1}(U')))=54$. Thus, $\overline{q(p^{-1}(U'))}$ fills the preimage of $\mathcal{C}_{14}$ and the result follows.
\end{proof}

\section{Surfaces of type \texorpdfstring{$^{II} Z_{E}^{10}$}{TEXT} and quintic Del Pezzo surfaces}\label{section 3}

If $X$ is a special cubic fourfold of discriminant $14$ containing a quartic scroll $\Sigma$ and a quintic Del Pezzo surface $D$, then it is easy to see that $\Sigma =3h^2 -D$ in $H^4(X,\ZZ)$. In fact, there exists a Segre threefold $\PP^1 \times \PP^2\subseteq \PP^5$, which has degree $3$, that satisfies $(\PP^1 \times \PP^2)\cap X= \Sigma \cup D$ by \cite{Tregub.TwoRmks}. The relation between surfaces of type $^{II} Z_{E}^{10}$ and quintic Del Pezzo surfaces is numerically similar to the case of quartic scrolls and quintic Del Pezzo surfaces. However, we can show that these surfaces cannot be residual to each other via some smooth threefold of $\PP^5$.

\begin{proof}[(Proof of Theorem \ref{surfaces of type II are not linked to quintic DP})]
    There exists an isometry $\gamma$ of $H^4(X, \ZZ)$ preserving $h^2$ such that $\langle h^2,D \rangle = \gamma (\langle h^2,S \rangle)$ by \cite[Proposition~3.2.4]{Hassett.special}. In particular, $\gamma$ defines an integral invertible matrix of the form $\begin{pmatrix} 1 &a \\ 0 &b \end{pmatrix}$ with respect to the bases $(h^2,S)$ and $(h^2,D)$. In other words, we have $S=ah^2+bD$ in $H^4(X,\ZZ)$, where $b=\pm 1$ and $a\in \ZZ$. Then $10=Sh^2=3a+5b$, and thus we see that $b=-1$ and $a=5$. Hence, $S=5h^2-D$ in $H^4(X,\ZZ)$.
    
    Note that a threefold $Y$ as in the statement must be non-degenerate, since $S$ is. By \cite[Proposition~1.5]{Okonek.3foldsinP5}, a non-degenerate smooth quintic threefold $Y\subseteq \PP^5$ is a so-called Castelnuovo threefold. This is a quadric bundle over $\PP^1$ with $15$ singular fibers. Moreover, its ideal sheaf fits in the short exact sequence
    \[0\rightarrow \ko_{\PP^5}(-4)^{\oplus 2}\rightarrow \ko_{\PP^5}(-3)^{\oplus 2}\oplus \ko_{\PP^5}(-2)\rightarrow \mathcal{I}_{Y/\PP^5}\rightarrow 0.\]
    From this resolution it is clear that $Y$ is $(2,3)$-linked to a $\PP^3$, e.g., there exist quadric and cubic hypersurfaces $Z$ and $X'$ in $\PP^5$, respectively, such that $Z\cap X' =Y\cup \PP^3$. In particular, both $Z$ and $X'$ are necessarily singular. Since a surface $S$ of type $^{II} Z_{E}^{10}$ is contained in a single quadric hypersurface of $\PP^5$, and this quadric is smooth, we see that $S$ cannot be contained in $Y$. In particular, $S$ and $D$ are never linked in $X$ via a smooth quintic $Y$.
\end{proof}

\end{document}